\newtheorem{thm}{Theorem}[section]
\newtheorem{prop}[thm]{Proposition}
\newtheorem{exa}[thm]{Example}
\newtheorem{dfn}[thm]{Definition}
\newtheorem{rem}[thm]{Remark}
\numberwithin{equation}{section}
\begin{document}

\title[]{On groups $G_{n}^{3}$ and imaginary generators}

\author{S.Kim}

\address{Department of Fundamental Sciences, Bauman Moscow State Technical University, Moscow, Russia \\
ksj19891120@gmail.com}

\author{V.O.Manturov}

\address{Chelyabinsk State University and Bauman Moscow State Technical University, Moscow, Russia \\
vomanturov@yandex.ru}

\begin{abstract}

In the present paper, we construct a monomorphism from (Artin) pure braid group $PB_{n}$ into a group, which is `bigger' than $PB_{n}$. Roughly speaking, this mapping is defined on words of braids by adding `new generators' between generators of $PB_{n}$. By this mapping we can get a new invariant for classical braids. As one of application of this invariant, we will show examples, which are minimal words in $PB_{n}$ and the minimality can be shown by the invariant. \\

MSC 57M25, 57M27.
\end{abstract}

\maketitle

\section{Introduction}
In the papers \cite{Manturov,ManturovNikonov}, the autors initiated the study of groups  $G_{n}^{k}$, describing the behaviour of dynamical systems of  $n$ particles governed by a certain general position codimension $1$ property with respect to $k$-tuples of particles. The main examples coming from this theory are homomorphisms from the $n$-strand pure braid group to the groups $G_{n}^{3}$ and $G_{n}^{4}$.
It turns out that the standard Artin presentation can also be described as a modification of such a standard presentation of (some subgroup of)
 $G_{n}^{2}$.
Stadard presentations of groups $G_{n}^{k}$ have a nice property: each letter in each word of this presentation has lots of ``invariants'': when applying each of the relations we either get a bijection between similar letters (in this case the corresponding letters have the same invariants), or we get a cancellation of two letters (in this case these two letters have the same values of the invariant). This leads to two obvious invariants.
 
First,for a given word  $\beta$ from  $G_{n}^{k}$ and for each generator $a_{m}$ in $\beta$ we get `indices'' $i_{a_{m}}(i)$ valued in the free product of  $(k-1)(n-k)$ copies of the group  $\mathbb{Z}_{2}$ for each $i \in \{1, \cdots n\} \backslash m$; roughly speaking, we get an invariant by counting the number of generators $a_{m'}$, which occur before  $a_{m}$ in the word $\beta$, for each $m' \subset \{1, \cdots n\} $, such that $|m' \cap m| =k-1$ and $i \in m'$.

On the other hand, by using these indices one can construct a homomorphism from the group $G_{n}^{k}$ to a free product of the groups $\mathbb{Z}_{2}$. The non-triviality of the image for this homomorphism can be easily checked and provides a sufficient conditon for the initial braid to be non-trivial.
In the present paper, by using a simple intutive construction we show how to associatie with any word in Artin's generator a certain word in a larger set of generators containing the initial word inside (Theorem~\ref{homo_tildeGn3}).
In other words, we get a phenomenon of  {\em imaginary generators} which allows one to  {\em read between letters}: for a word in letters $\sigma_{ij}$ we can ``see''  the letters $a_{ijk}$, placed between $\sigma_{ij}$, so tat the equivalence of the initial words in $\sigma_{ij}$ leads to the equivalence of the resulting words in  $\sigma_{ij},a_{ijk}$, see Definiton \ref{relations}. In the algebraic language this is described by means of an injection of a smaller group to a larger one; the compostion of this homomorphism with the obvious projection is the identity homomorphism.
This allows one to use the larger group (a small modificaton of  the group $G_{n}^{3}$) as a modification of the small group  (actualy, the braid group); which makes it possible to use  $G_{n}^{3}$ for constructing invariants of crossings of a classical braid: with each classical crossing we associate a set of invarians which do not change under the third Reidemeister move; if we apply the second Reidemeister move, the crossings which can cancel have the same invariants.
 
This allows us to ``foresee'' that some two crossings of a classical braid can not be cancelled. In Section 2, we shall define the group $\widetilde{G}_{n}^{3}$, which appears when we splice ``classical braid generators'' into the group $G_{n}^{3}$. In the Setion 3, we construct a new invariant of pure braids by using   $\widetilde{G}_{n}^{3}$ by means of a homomorphism from  $G_{n}^{3}$ to the free product of some copies of the group $\mathbb{Z}_{2}$ (these homomorphisms lead to evident composite homomorphisms from $\widetilde{G}_{n}^{3}$ to the free product mentioned above). In Secton 4, we get a sufficient condition for two adjacent generators $b_{ij}$ and $b_{ij}^{-1}$ not to cancel. In the present paper, we do not pretend to consturct necessary and suficient condition for cancellability; we just give some striking examles when this cancellability is impossible.

In the end of the paper, we give a list of unsolved problems and topics for future discussion.

\section{Homomorphisms from classical braids to ${\widetilde G_{n}^{3}}$}

Denote ${\bar n} :=\{1,\cdots, n\}$.

\begin{dfn} \cite{CohenFalkRandell}
The pure braids group $PB_{n}$ is the group given by group presentation generated by $\{b_{ij} ~|~ 1 \leq i<j \leq n\}$ subject to the following relations:
\begin{equation*}
b_{rs}b_{ij}b_{rs}^{-1} = \left\{
\begin{array}{cc} 
    b_{ij}, & \text{if}~ s<i  ~\text{or}~ j<r, \\
       b_{is}^{-1}b_{ij}b_{is}, & \text{if}~ i<j=r<s, \\
       b_{ij}^{-1}b_{ir}^{-1}b_{ij}b_{ir}b_{ij}, & \text{if}~ i<j<r=s, \\
       b_{is}^{-1}b_{ir}^{-1}b_{is}b_{ir}b_{ij}b_{ir}^{-1}b_{is}^{-1}b_{ir}b_{is},& \text{if}~ i<j<r<s. 
      
   \end{array}\right.
   \end{equation*}
\end{dfn}

\begin{dfn}\label{def_Gn3}
The group $G_{n}^{3}$ is the group given by group presentation generated by $\{ a_{\{ijk\}}~|~ \{i,j,k\} \subset \bar{n}, |\{i,j,k\}| = 3\}$ subject to the following relations:

\begin{enumerate}
\item $a_{\{ijk\}}^{2} = 1$ for $\{i,j,k\} \subset \bar{n}$, 
\item $a_{\{ijk\}}a_{\{stu\}} = a_{\{stu\}}a_{\{ijk\}}$, for $| \{i,j,k\} \cap \{s,t,u\} | < 2$,
\item $a_{\{ijk\}}a_{\{ijl\}}a_{\{ikl\}}a_{\{jkl\}} = a_{\{jkl\}}a_{\{ikl\}}a_{\{ijl\}}a_{\{ijk\}}$ for dictinct $i,j,k,l$.
\end{enumerate}
We denote $ a_{ijk} := a_{\{ijk\}}$.
\end{dfn}

Note that $a_{ijk} = a_{jik} = \cdots$, but $b_{ij} \neq b_{ji}.$
In \cite{ManturovNikonov} V.O.Manturov and I.N.Nikonov worked on that braids on $n$ strands can be considered as dynamical systems, in which $n$ distinct points move on the plane. Roughly speaking, the image of a braid in the group $G_{n}^{3}$ can be obtained by reading every moment, in which three points $(i,j,k)$ are placed on the same line. More precisely, the homomorphism $\phi_{n}$ from $PB_{n}$ to $G_{n}^{3}$ is defined by 
\begin{equation}
\phi_{n}(b_{ij}) = (c_{i,i+1})^{-1}(c_{i,i+2})^{-1} \cdots (c_{i,j-1})^{-1} (c_{i,j})^{2} c_{i,j-1} \cdots c_{i,i+2} c_{i,i+1},
\end{equation}
for each generator $b_{ij}$ of the group $PB_{n}$, $i, j \in \bar{n}, i<j$, where
\begin{equation}
c_{i,k} =  \prod_{l=k+1}^{n} a_{ikl} \prod_{l=1}^{k-1} a_{ikl},
\end{equation}
for $k \in \{i+1, \cdots, j\}.$

\begin{dfn} \label{def_tildeGn3}
 The group $\widetilde{G}_{n}^{3}$ is given by group presentation generated by $\{ a_{\{ijk\}}~|~ \{i,j,k\} \subset \bar{n}, |\{i,j,k\}| = 3\}$ and $\{ \sigma_{ij}~|~  i,j \in \{1, \dots, n\}, |\{i,j\}| = 2 \}$ subject to the following relations:
 
\begin{enumerate}\label{relations}
\item[(a)] $a_{\{ijk\}}^{2} = 1$ for $\{i,j,k\} \subset \{1, \cdots,n\}$, $|\{i,j,k\}| =3$, 
\item[(b)] $a_{\{ijk\}}a_{\{stu\}} = a_{\{stu\}}a_{\{ijk\}}$, if $| \{i,j,k\} \cap \{s,t,u\} | < 2$,
\item[(c)] $a_{\{ijk\}}a_{\{ijl\}}a_{\{ikl\}}a_{\{jkl\}} = a_{\{jkl\}}a_{\{ikl\}}a_{\{ijl\}}a_{\{ijk\}}$ for dictinct $i,j,k,l$,
\item[(d)] $\sigma_{ij}\sigma_{kl} = \sigma_{kl}\sigma_{ij}$ for dictinct $i,j,k,l$,
\item[(e)] $\sigma_{ij}a_{\{stu\}} = a_{\{stu\}}\sigma_{ij}$, if $| \{i,j\} \cap \{s,t,u\} | < 2$,
\item[(f)] $a_{\{ijk\}}\sigma_{ij}\sigma_{ik}\sigma_{jk} = \sigma_{jk}\sigma_{ik} \sigma_{ij}a_{\{ijk\}}$ for dictinct $i,j,k$,
\item[(g)] $\sigma_{ij}a_{\{ijk\}}\sigma_{ik}\sigma_{jk} = \sigma_{jk} \sigma_{ik} a_{\{ijk\}}\sigma_{ij}  $ for dictinct $i,j,k$,
\item[(h)] $\sigma_{ij}\sigma_{ik}a_{\{ijk\}}\sigma_{jk} = \sigma_{jk}a_{\{ijk\}} \sigma_{ik} \sigma_{ij} $ for dictinct $i,j,k$,
\item[(i)] $\sigma_{ij}\sigma_{ik}\sigma_{jk}a_{\{ijk\}} = a_{\{ijk\}}\sigma_{jk} \sigma_{ik}\sigma_{ij}  $ for dictinct $i,j,k$.

\end{enumerate}
We denote $a_{ijk} \sim a_{\{ijk\}}$; notice that $\sigma_{ij} \neq \sigma_{jk}$.
\end{dfn}

Following \cite{ManturovNikonov}, braids can be presented by dynamical systems, in which points move. But we consider braids on $n$ strands as $n$ moving points with one additional fixed (infinite) point. Let us define a mapping from $PB_{n}$ to $\widetilde{G}_{n}^{3}$. Now we consider pure braids as moving $n$ points on upper semi-disk. As the above, mapping from $PB_{n}$ to $\widetilde{G}_{n}^{3}$ will be defined by ``reading'' moments when some three points are on the same line, which is analogous to the construction of mapping from $PB_{n}$ to $G_{n}^{3}$. Let $n$ enumerated points $P = \{p_{1}, \cdots, p_{n}\}$ be placed on semicircle $\{ z \in \mathbb{C} ~|~ |z|=1, Imz\geq 0 \}$ in numerated order with respect to the courter-clockwise orientation. Let us place one more (infinite) point $p_{\infty}$ in the center of semicircle. When the points move, if three points $\{p_{i},p_{j},p_{k}\} \subset P$ are on the same line, we write the generator $a_{ijk}$. If points $p_{j},p_{i}, p_{\infty}$ are on the same directed line from $p_{\infty}$ in this order and the point $p_{i}$ passes the directed line from left to right, then we write the generator $\sigma_{ij}$, see Fig.~\ref{generators}.  For $i,j \in \bar{n}, i<j$ define
\begin{center}

\begin{eqnarray}\label{gen-phi}
c_{i,j} =(\prod_{k = j+1}^{n} a_{ijk}) \sigma_{ij}^{-1}  (\prod_{k =1}^{j-1} a_{ijk}), \\
\bar{c_{i,j}} =(\prod_{k = j+1}^{n} a_{ijk}) \sigma_{ij}  (\prod_{k =1}^{j-1} a_{ijk}),\\
c_{j,i} =(\prod_{k = j+1}^{n} a_{ijk}) \sigma_{ji}^{-1}  (\prod_{k =1}^{j-1} a_{ijk}),\\
\bar{c_{j,i}} =(\prod_{k = j+1}^{n} a_{ijk}) \sigma_{ji}  (\prod_{k =1}^{j-1} a_{ijk}). 
\end{eqnarray}

\end{center}
Then the mapping $\widetilde{\phi}$ from $PB_{n}$ to $\widetilde{G}_{n}^{3}$ is defined by 

 \begin{equation}\label{def-phi}
\widetilde{\phi}(b_{ij}) = c_{i,i+1}^{-1} c_{i,i+2}^{-1}\cdots c_{i,j-1}^{-1} \bar{c_{i,j}} \bar{c_{j,i}} c_{i,j-1}\cdots c_{i,i+2}c_{i,i+1}.
 \end{equation}

\begin{figure}[h]
\begin{center}
 \includegraphics[width = 15cm]{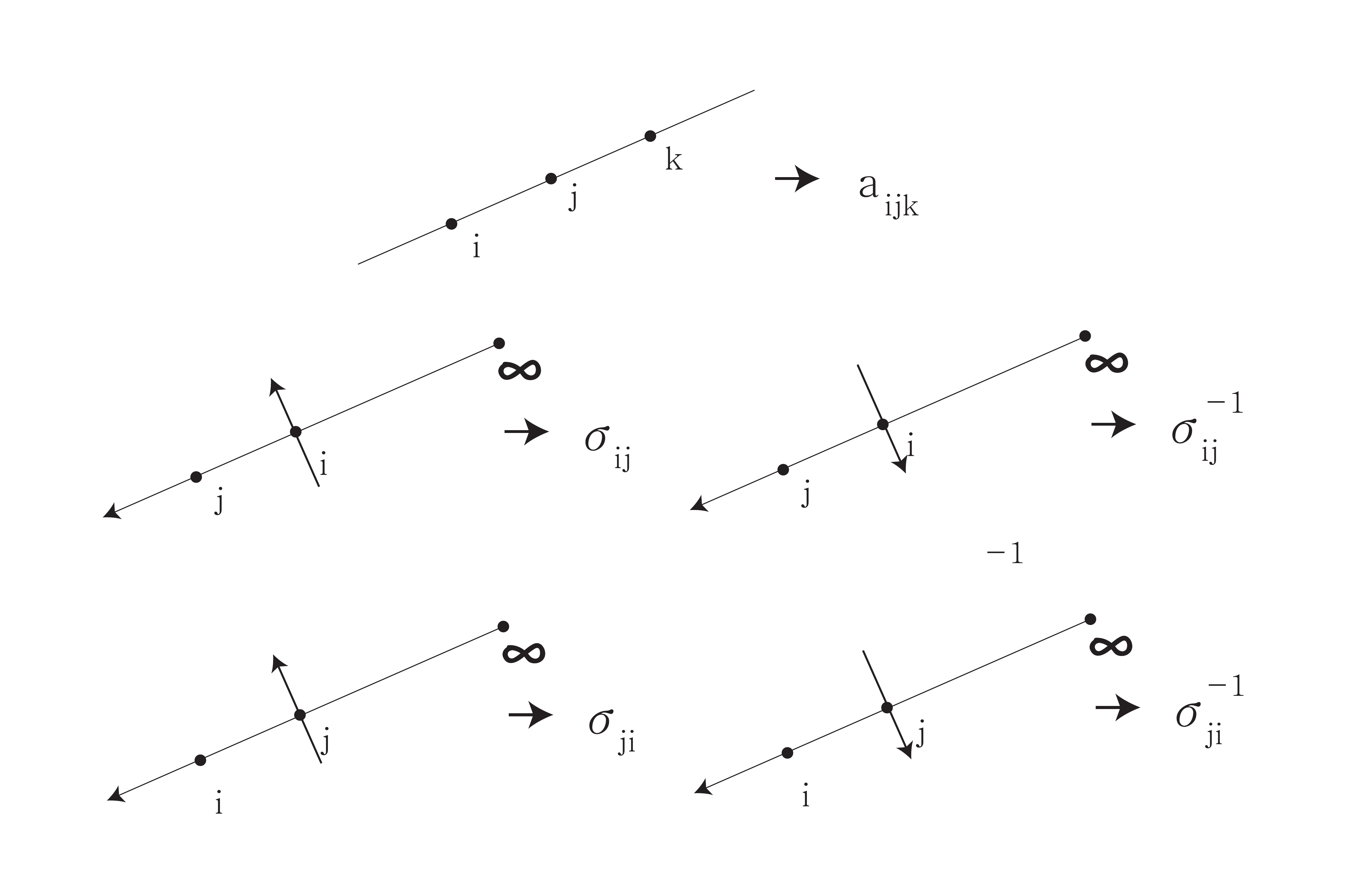}

\end{center}

\caption{Generators related for each cases}\label{generators}
\end{figure}

\begin{exa}
Let $n=6$, $i=2, j=4$ and points are placed on semi-circle shown in Fig.~\ref{exa1}. By reading every triple points, we obtain the following word in the group $\widetilde{G}_{6}^{3}$:
$$\widetilde{\phi}(b_{24}) = a_{123}\sigma_{23}a_{236}a_{235}a_{234} a_{234}a_{124}\sigma_{24}a_{246}a_{245}   a_{234}a_{124}\sigma_{42}a_{246}a_{245}  a_{234}a_{235}a_{236}\sigma_{23}^{-1}a_{123},$$
see Fig.~\ref{exa1-2}.
By definition, we obtain
 \begin{equation}
 \widetilde{\phi}(b_{24}) = c_{23}^{-1}\bar{c_{24}}\bar{c_{42}}c_{23}.
 \end{equation}
 
\begin{figure}[h]
\begin{center}
 \includegraphics[width = 12cm]{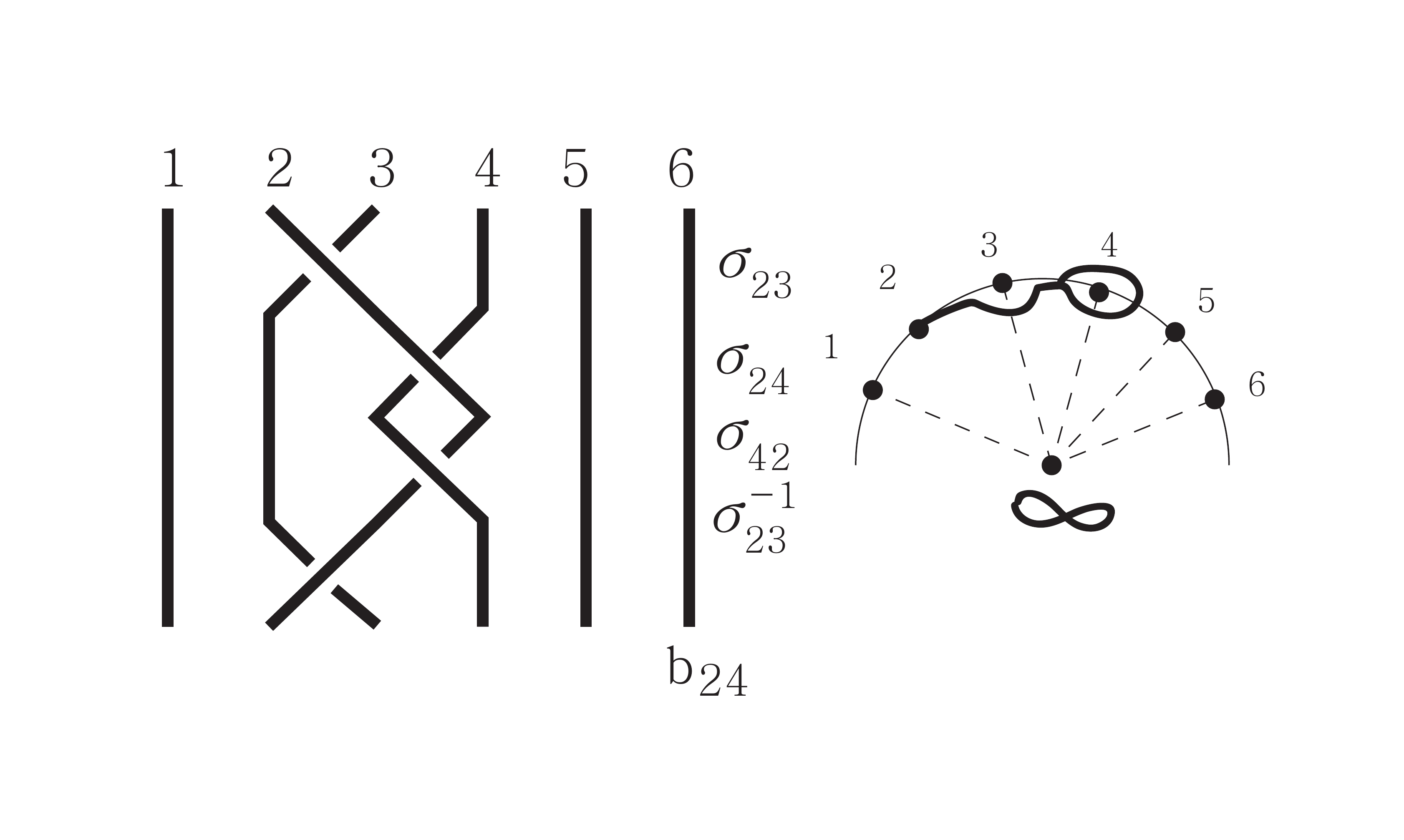}

\end{center}

\caption{A generator $b_{24}$ in $PB_{6}$ and its dynamical system}\label{exa1}
\end{figure}

\begin{figure}[h]
\begin{center}
 \includegraphics[width = 15cm]{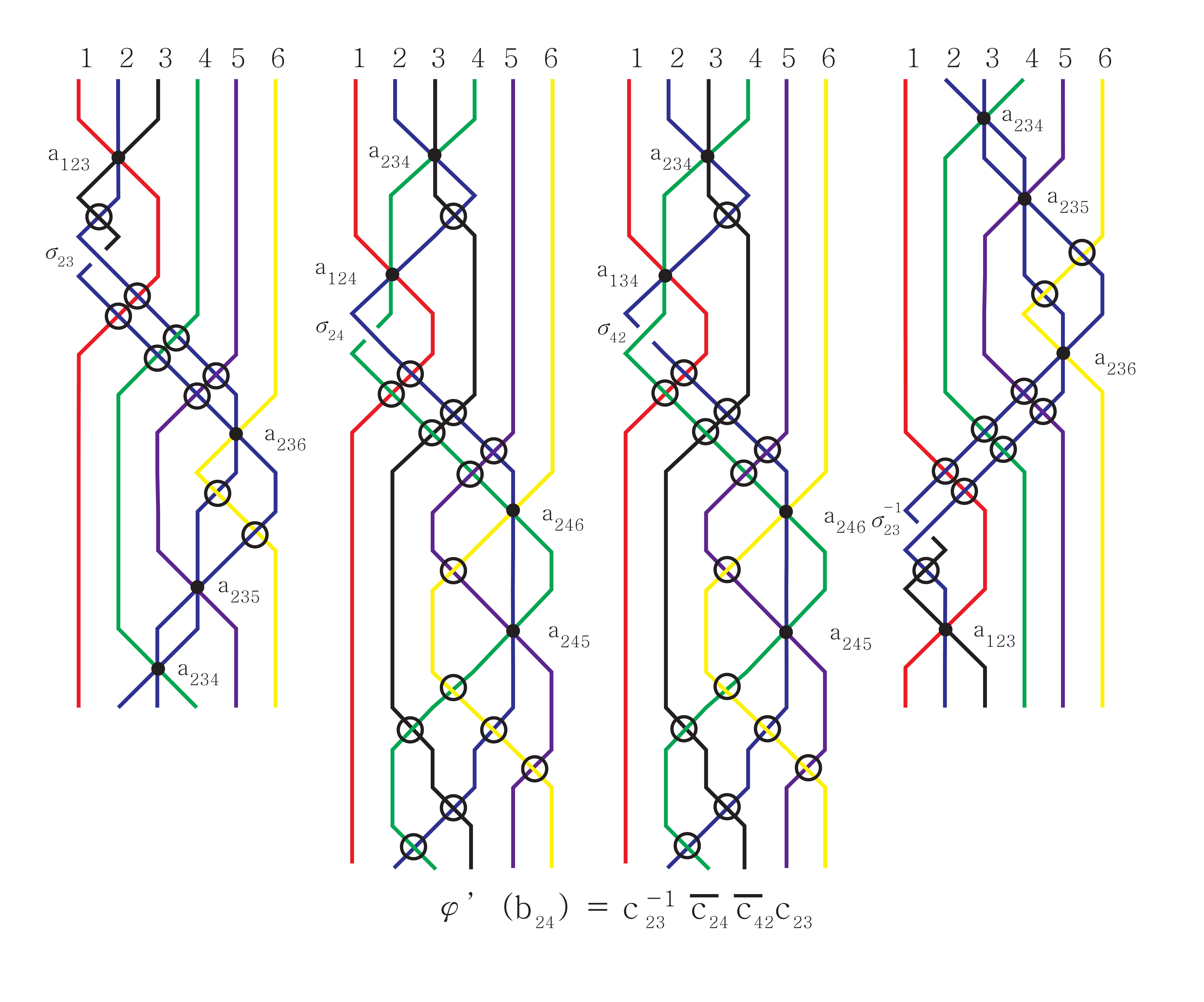}

\end{center}

\caption{The diagram of the image of $b_{24}$ along $\widetilde{\phi}$}\label{exa1-2}
\end{figure}

\end{exa}

The sets of codimension 1 is related to generators, and the sets of codimension 2 are related to relations. They are analogous to those from~\cite{ManturovNikonov} in the case of (n+1) points. In this work, if a point passes to triple point, which contains the infinite point ($\infty$), then it is possible to give an additional information and write one of generators not of $a_{ij\infty}$, but of $\sigma_{ij},\sigma_{ij}^{-1},\sigma_{ji}$ or $\sigma_{ji}^{-1}$. This is the main idea to prove the theorem given below, but this theorem differs from the main theorem in \cite{ManturovNikonov}, roughly speaking, the image has more generators and relations than the image in \cite{ManturovNikonov}.

\begin{thm}\label{homo_tildeGn3}
The mapping $\widetilde{\phi}$ from $PB_{n}$ to $\widetilde{G}_{n}^{3}$, defined by (\ref{def-phi}), is a homomorphism. In other words, if pure braids $\beta,\beta'$ are equivalent in $PB_{n}$, then $\widetilde{\phi}(\beta)=\widetilde{\phi}(\beta')$ in $\widetilde{G}_{n}^{3}$.
\end{thm}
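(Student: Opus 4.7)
My plan is to argue geometrically, following the proof of the analogous homomorphism $\phi_n: PB_n \to G_n^3$ in \cite{ManturovNikonov}, now enhanced by the extra data carried by the $\sigma_{ij}$-letters whenever a collinearity event involves the fixed central point $p_\infty$.

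First I would set up the dynamical picture: represent a pure braid $\beta \in PB_n$ by a smooth isotopy of $n$ marked points $p_1(t),\dots,p_n(t)$ in the upper half-disc, together with the additional fixed point $p_\infty$ at the centre. A generic such isotopy meets only codimension-1 strata of the configuration space, where either (i) three finite points become collinear (we record $a_{ijk}$), or (ii) two finite points and $p_\infty$ become collinear, with $p_i$ crossing the directed line through $p_\infty$ and $p_j$ in one of the two possible orientations (we record one of $\sigma_{ij},\sigma_{ij}^{-1},\sigma_{ji},\sigma_{ji}^{-1}$ as prescribed by Figure~\ref{generators}). Concatenating these letters along the isotopy yields a word $w(\beta)$ in the generators of $\widetilde{G}_n^3$, and direct inspection of the standard small realization of a single Artin generator $b_{ij}$ shows that $w(b_{ij})$ coincides with the expression in formula (\ref{def-phi}); thus $w$ agrees with the candidate $\widetilde{\phi}$ on the generators of $PB_n$.

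Next I would prove that $w(\beta)$ is independent of the generic realization. Two generic realizations of the same braid can be joined by a one-parameter family of isotopies that, by transversality, meets only finitely many codimension-2 strata of the configuration space. These strata fall into nine classes, which I would match to the defining relations of $\widetilde{G}_n^3$ in turn: a doubled triple-point event giving (a); two simultaneous triple-point events on index sets meeting in at most one element giving (b); the four-point tetrahedron degeneration giving (c); two $\sigma$-events on disjoint index pairs giving (d); a $\sigma$-event independent of a triple-point event giving (e); and, crucially, the four codimension-2 strata in which the three finite points $p_i,p_j,p_k$ and $p_\infty$ become simultaneously collinear on a single line. In this last degeneration, one $a_{ijk}$-event coexists with three $\sigma$-events involving the pairs $\{i,j\},\{i,k\},\{j,k\}$, and the four possible positions of the $a$-event within the sequence of three $\sigma$-events are exactly relations (f), (g), (h), (i). Verifying that the transverse crossing of each stratum rewrites the word by precisely the corresponding relation yields that $w(\beta)$ depends only on the class of $\beta$.

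From this, the theorem follows: if $\beta=\beta'$ in $PB_n$, then any geometric realization of $\beta$ is isotopic through pure braids to any realization of $\beta'$, and the independence statement gives $\widetilde{\phi}(\beta)=w(\beta)=w(\beta')=\widetilde{\phi}(\beta')$. The main technical obstacle is the exhaustive analysis of the four codimension-2 strata containing $p_\infty$: one must check that as the $a_{ijk}$-event slides past the three associated $\sigma$-events, the rewrite matches the correct relation among (f)--(i) with precisely the right distribution of first/second indices on each $\sigma$ and the correct signs, reflecting on which side of the common line each of $p_i,p_j,p_k$ exits. Once this bookkeeping is in place, the remainder of the argument is a direct adaptation of the proof in \cite{ManturovNikonov}.
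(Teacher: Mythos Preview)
Your proposal is correct and follows essentially the same route as the paper's own proof: realize a pure braid as a dynamical system of $n$ points together with the fixed centre $p_{\infty}$, read off generators at codimension-1 collinearity events (recovering formula (\ref{def-phi}) on the Artin generators), and then show well-definedness by analyzing the finitely many codimension-2 degenerations encountered in a generic one-parameter family, matching each to one of the relations (a)--(i). The paper organizes these degenerations into three cases A, B, C (tangential triple, four collinear points with or without $p_{\infty}$, and two simultaneous triples with small overlap), which is exactly the case analysis you outline, only with a slightly coarser packaging.
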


\begin{proof}

Let $z_{k} = e^{\pi i (n-k) / (n-1)},$ $k=1, \cdots ,n,$ be points on semicircle $C = \{z = a+bi \in \mathbb{C} ~|~ |z| =1, b\geq 0 \}$. Pure braids can be considered as dynamical systems, in which points move on the plane such that the position of points in the start and in the end are same as asserted in the above. 
Now we clearly formulate the image of each generator in $PB_{n}$. For $i<j$ a generator $b_{ij}$ can be considers as the following dynamical system (see Fig.~\ref{c(ij)1}):
\begin{enumerate}
\item The point $z_{i}$ moves along the semi-circle $C$ passes beside $z_{i+1},z_{i+2}, \cdots, z_{j-1}$ to the point $z_{j}$. 
\item The point $z_{i}$ turn around $z_{j}$ in the counter-clockwise orientation. 
\item The point $z_{i}$ comes back to the initial position beside $z_{j-1}, \cdots, z_{i+1}$.
\end{enumerate}

\begin{figure}[h]

 \includegraphics[width =17cm]{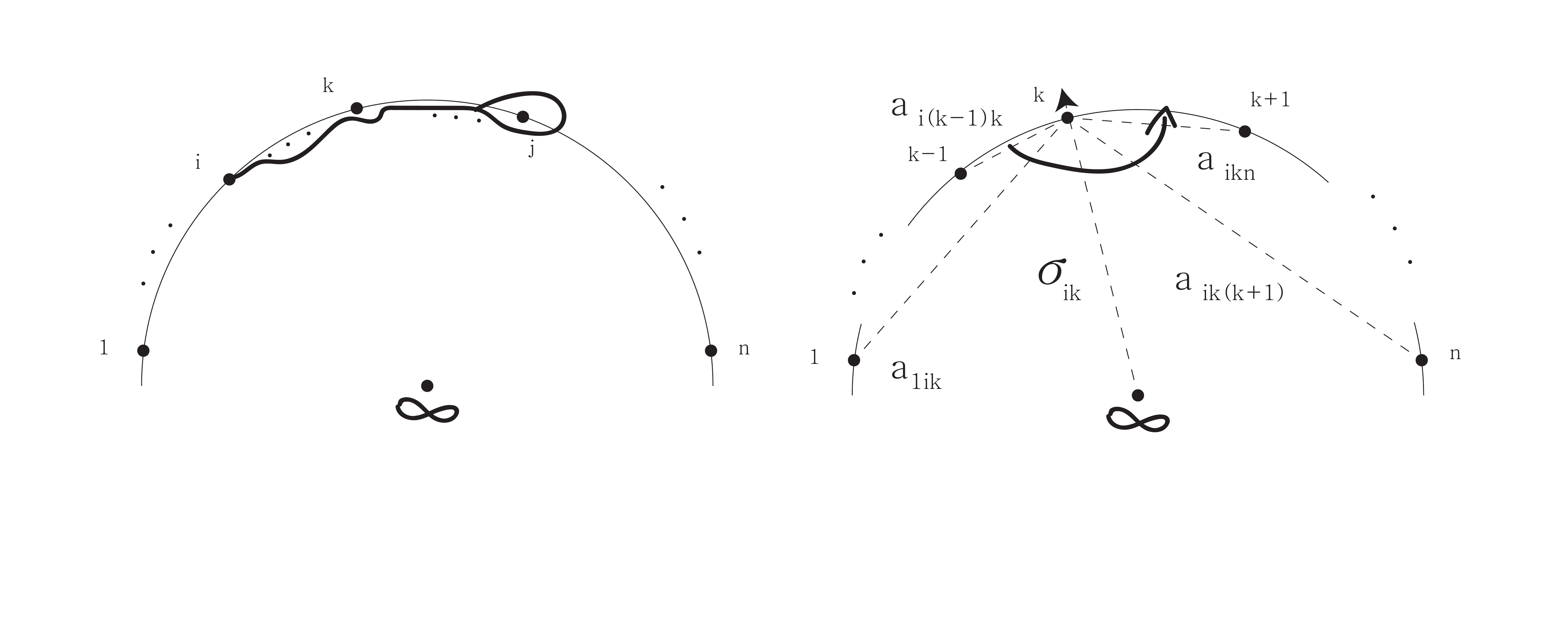}

\caption{The dynamical systems associating with $b_{ij}$ and $c_{ik}$}\label{c(ij)1}
\end{figure}

\begin{figure}[h]
\begin{center}
 \includegraphics[width = 8cm]{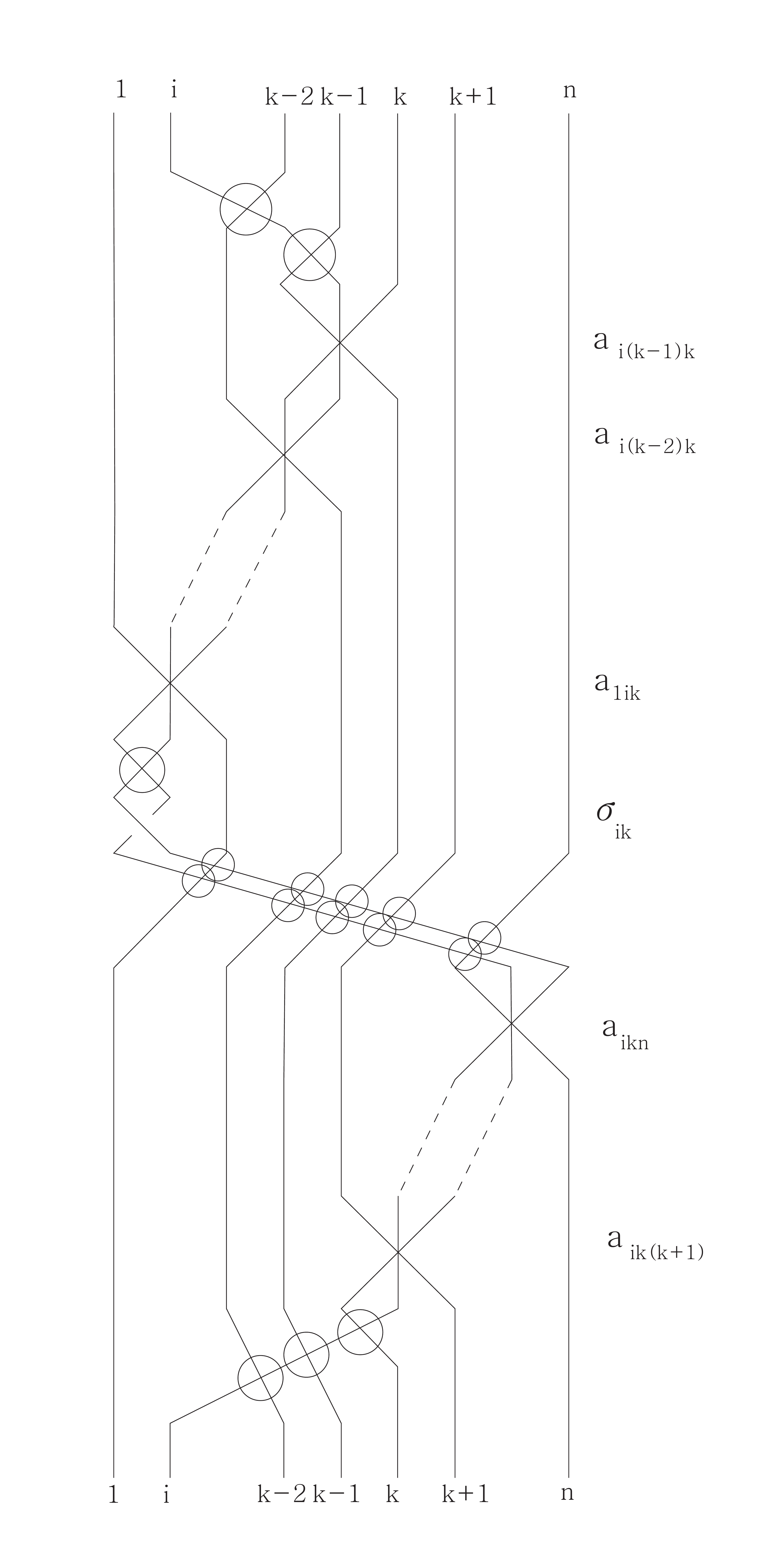}

\end{center}

\caption{The diagram of $c_{ij}^{-1}$}\label{c(ij)2}
\end{figure}
When three points are placed on the same straight line, we write one of generators of $\widetilde{G}_{n}^{3}$ with respect to the rules, which are shown in Fig.~\ref{generators}. In the end of the process, we obtain a word
$$\widetilde{\phi}(b_{ij}) = c_{i,i+1}^{-1} c_{i,i+2}^{-1}\cdots c_{i,j-1}^{-1} \bar{c_{i,j}} \bar{c_{j,i}} c_{i,j-1}\cdots c_{i,i+2}c_{i,i+1}.$$
Now we will show that if two pure braids $\beta$ and $\beta'$ are equivalent in $PB_{n}$, then $\widetilde{\phi}(\beta)$ and $\widetilde{\phi}(\beta')$ are equivalent in $\widetilde{G}_{n}^{3}$.
Firstly, we introduce the following two definitions:
a dynamical system is {\it nice,} if the following holds: 
\begin{description}
\item[P1] If points are placed on the same straight line, then the number of the points is less than or equal to $3$.
\item[P2] For each moment there is at most one triple of points $\{p_{i}, p_{j}, p_{k} \}$, which are collinear. If such a triple of point happens, we call it the {\it critical moment of type $\{i,j,k\}$.}
\item[P3] The number of critical moments is finite.
\end{description}

A dynamical system $D$ is called {\it stable}, if it satisfies the followings:

\begin{description}

\item[C1] Every dynamical system $D'$ in the neighborhood $U(D)$ of $D$ (that is, $D'$ is obtained from $D$ by transforming $D$) is nice,
\item[C2] For each dynamical system $D'$ in $U(D)$, $\widetilde{\phi}(D) = \widetilde{\phi}(D')$.

\end{description}

Without loss of generality we may assume that the pure braids have the forms of nice stable dynamical system. 

Let $\{ \beta_{t} \}_{t \in I}$ be a isotopy such that $\beta_{0} = \beta$ and $\beta_{1} = \beta'$. Without loss of generality, we may assume that $\{ \beta_{t} \}_{t \in I}$ satisfies the followings:
\begin{enumerate}
\item For every $t \in (t_{0}, t_{1}) \subset I$, if $\beta_{t}$ is stable and nice, then the set of critical moments of $\beta_{t}$ changes continuously.
\item In $I$ there are finite $s_{0} \in I$ such that $\beta_{s_{0}}$ is not nice or not stable. In other words, $\beta_{s_{0}}$ satisfies one of the followings;
\begin{description}
\item[A] Suppose that there are three points $\{z_{i},z_{j},z_{k}\} \subset \{z_{1} \cdots z_{n}\}$, such that the points $\{z_{i},z_{j},z_{k}\}$ are on the same line in the moment $t$ and they are still on the same line in the first approximation centered at $t$. 
For some $\epsilon > 0$, the word $\widetilde{\phi}(\beta_{s_{0}-\epsilon})$ has the form of $Fa_{ijk}a_{ijk} B$, but $\widetilde{\phi}(\beta_{s_{0}+\epsilon})$ has the form of $FB$ (see. Fig.~\ref{proof_mixed0}). That is, when $\beta_{t}$ passes the moment $\beta_{s_{0}}$, $\widetilde{\phi}(\beta_{s_{0}+\epsilon})$ is obtained from $\widetilde{\phi}(\beta_{s_{0}-\epsilon})$ by the relation (a) in Definition~\ref{def_tildeGn3}.

\item[B] Suppose that four points are on the same line in the moment $\beta_{s_{0}}$ (see. Fig.~\ref{proof_mixed1}). If they have not the infinite point, then for some $\epsilon > 0$, $\widetilde{\phi}(\beta_{s_{0}-\epsilon})$ contains a product of $a_{ijk},a_{ijl}, a_{ikl}, a_{jkl}$ in some order, $\widetilde{\phi}(\beta_{s_{0}+\epsilon})$ have the product of $a_{ijk},a_{ijl}, a_{ikl}, a_{jkl}$ in the reverse order.  When $t$ is changed, $\widetilde{\phi}(\beta_{s_{0}+\epsilon})$ is obtained from $\widetilde{\phi}(\beta_{s_{0}-\epsilon})$ by the relation (c) in Definition~\ref{def_tildeGn3}.

If one of the four points is the infinite point, then for some $\epsilon > 0$, the word $\widetilde{\phi}(\beta_{s_{0}-\epsilon})$ has a product of $a_{ijk},\sigma_{ij},\sigma_{ik},\sigma_{jk}$ in some order, and the word $\widetilde{\phi}(\beta_{s_{0}+\epsilon})$ has the product of  $a_{ijk},\sigma_{ij},\sigma_{ik},\sigma_{jk}$ in the reverse order. When $\beta_{t}$ passes the moment $\beta_{s_{0}}$, the word $\widetilde{\phi}(\beta_{s_{0}+\epsilon})$ is obtained from the word $\widetilde{\phi}(\beta_{s_{0}-\epsilon})$ by the relations (f),(g),(h),(i) of relations of the group  $\widetilde{G}_{n}^{3}$ in Definition~\ref{def_tildeGn3}.

\item[C] Suppose that two sets $m$ and $m'$ of three points on the lines $l$ and $l'$ respectively in the moment $s_{0}$, such that $ | m \cap m' | < 2 $ (see. Fig.~\ref{proof_mixed2}). Then $\widetilde{\phi}(\beta_{t})$ is changed according to one of the relations (b),(d),(e) of relations of the group  $\widetilde{G}_{n}^{3}$ in Definition~\ref{def_tildeGn3}. 

\end{description}
\end{enumerate}

\begin{figure}[h]
\begin{center}
 \includegraphics[width = 12cm]{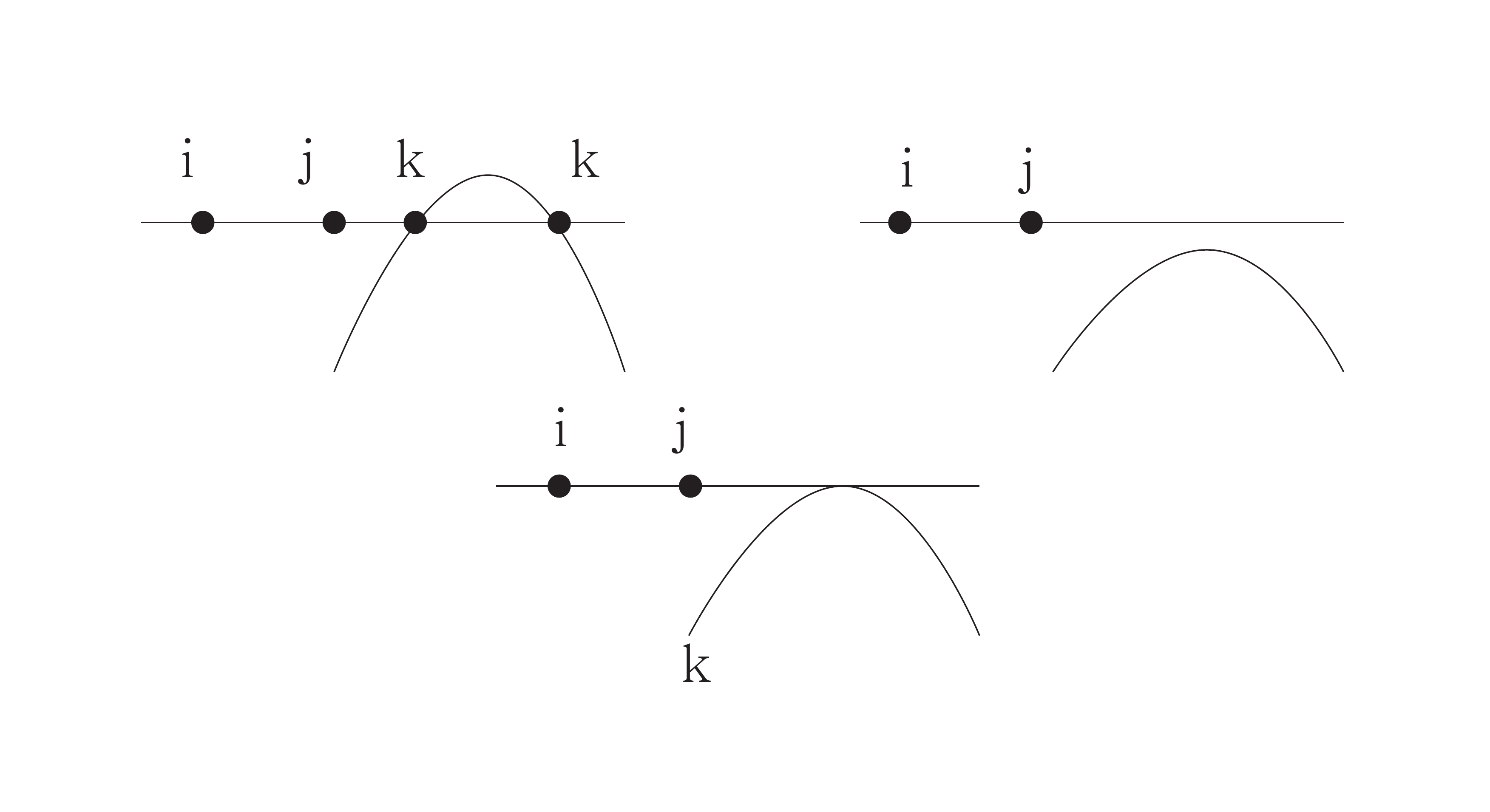}

\end{center}
\caption{Case A}\label{proof_mixed0}
\end{figure}

\begin{figure}[h]

 \includegraphics[width = 10cm]{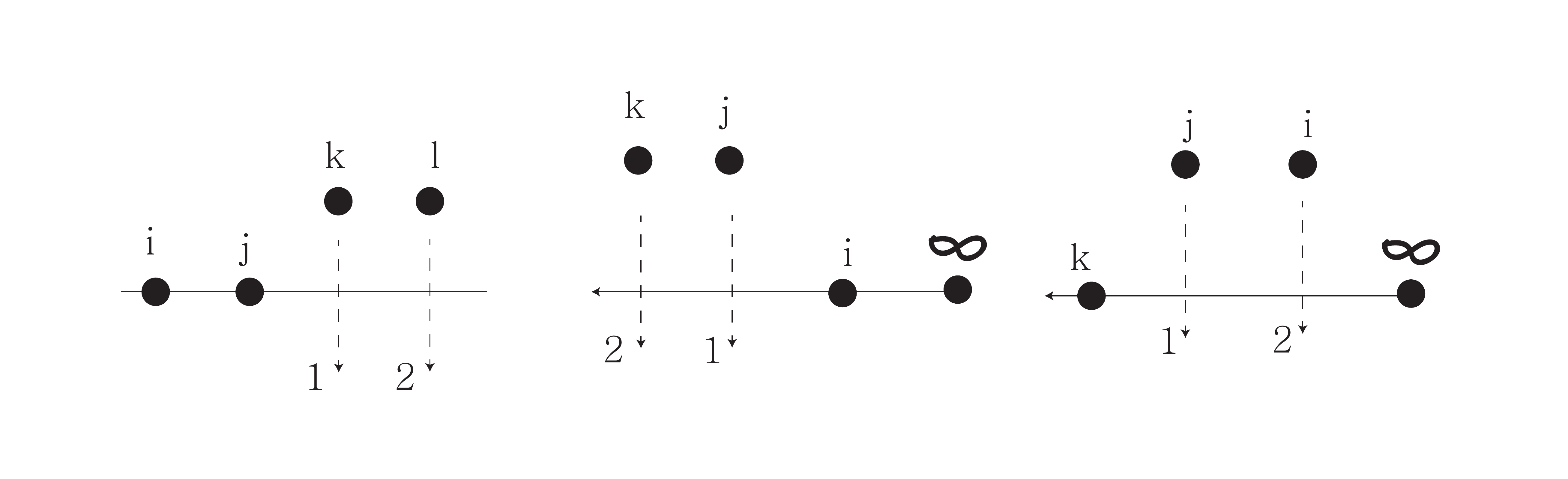}

\caption{Case B}\label{proof_mixed1}
\end{figure}

\begin{figure}[h]
\begin{center}
 \includegraphics[width = 10cm]{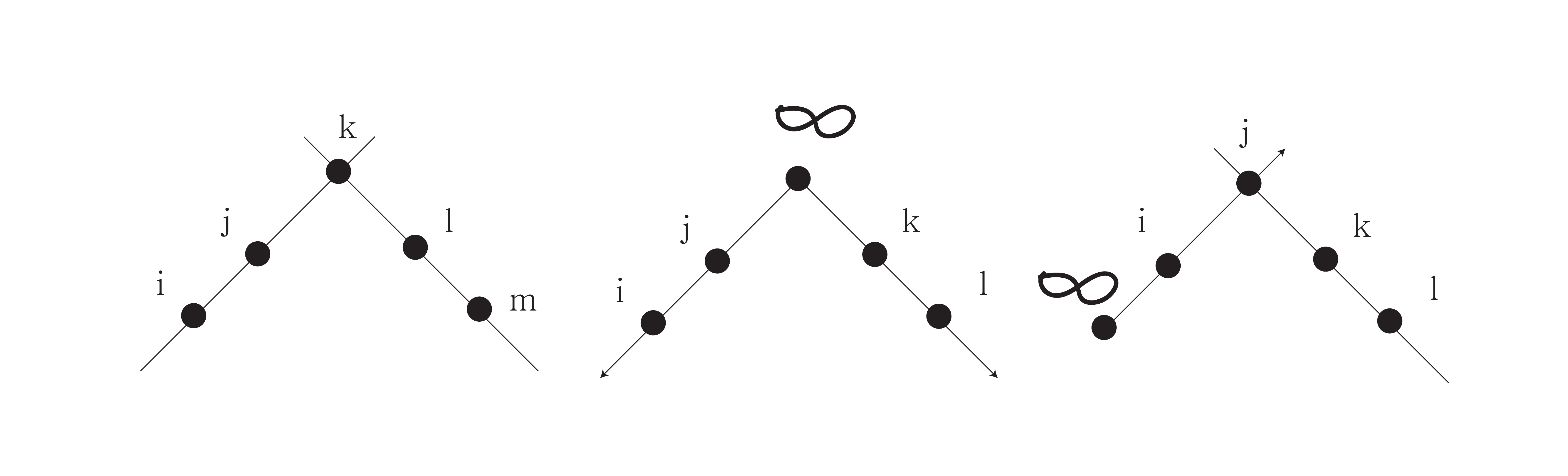}

\end{center}

\caption{Case C}\label{proof_mixed2}
\end{figure}

As the above, we can rewrite every type of deformations (codimension 2), which correspond to general position isotopies between two braids. Passing those moments which are either not good or not stable, the word is deformed by one of relations of the group $\widetilde{G}_{n}^{3}$ and the proof is completed.

\end{proof}

\section{Homomorphisms from $\widetilde{G}_{n}^{3}$ to $G_{n+1}^{3}$}

Define the homomorphism $pr : \widetilde{G}_{n}^{3} \rightarrow G_{n}^{3}$ by

\begin{equation}
pr(a_{ijk})  = \left\{
\begin{array}{cc} 
    1, & \text{if}~ \infty \in \{i,j,k\}, \\
       a_{ijk}, & \text{if}~ \infty \not\in \{i,j,k\}.
      
   \end{array}\right.
   \end{equation}
The proposition below follows from the definition of $pr$:
 \begin{prop}
  $pr \circ \widetilde{\phi} = \phi$.
 \end{prop}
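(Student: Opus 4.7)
The plan is to reduce everything to a generator-by-generator check. First I would clarify what $pr$ does on the $\sigma$-generators, since the stated formula only specifies values on $a_{ijk}$. The natural reading, consistent with the remark that $\sigma_{ij}$ refines the ``infinite triple'' $a_{ij\infty}$, is to extend by $pr(\sigma_{ij})=1$. With this convention in place, I would verify that $pr$ is a well-defined homomorphism by running through the nine relations (a)--(i) of Definition~\ref{def_tildeGn3}: relations (a)--(c) involve only $a_{ijk}$ and map to the relations (1)--(3) of Definition~\ref{def_Gn3}; relation (e) becomes the trivial $a_{stu}=a_{stu}$; and relations (d) and (f)--(i) have every $\sigma$-factor killed by $pr$, leaving obviously equal words in $G_n^3$.

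Once $pr$ is known to be a homomorphism, $pr\circ\widetilde{\phi}$ is a homomorphism $PB_n\to G_n^3$, and it suffices to verify $pr(\widetilde{\phi}(b_{ij}))=\phi(b_{ij})$ on each Artin generator $b_{ij}$. Comparing (\ref{gen-phi}) with the definition of $\phi$, I would establish three sub-identities in $G_n^3$: first, $pr(c_{i,k})=c_{i,k}$ for $k<j$, which is immediate since the $\widetilde G_n^3$-word $c_{i,k}$ is obtained from the $G_n^3$-word $c_{i,k}$ by inserting a single $\sigma_{ik}^{-1}$; second, $pr(\bar{c_{i,j}})=c_{i,j}$, again by deleting the single $\sigma_{ij}$; and crucially, third, $pr(\bar{c_{j,i}})=c_{i,j}$. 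This last point is the only observation that is not pure bookkeeping: it uses that $a_{\{ijk\}}$ is unordered in its indices, so after removing $\sigma_{ji}$ the remaining product $(\prod_{k=j+1}^{n}a_{ijk})(\prod_{k=1}^{j-1}a_{ijk})$ is literally the $G_n^3$-word $c_{i,j}$. Thus $pr(\bar{c_{i,j}}\bar{c_{j,i}})=(c_{i,j})^2$.

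Assembling the conjugating prefix and suffix, one obtains
\[
pr(\widetilde{\phi}(b_{ij}))=c_{i,i+1}^{-1}c_{i,i+2}^{-1}\cdots c_{i,j-1}^{-1}(c_{i,j})^{2}c_{i,j-1}\cdots c_{i,i+2}c_{i,i+1}=\phi(b_{ij}),
\]
and equality on generators forces equality of the two homomorphisms. I anticipate no genuine obstacle: the well-definedness of $\widetilde\phi$ is already granted by Theorem~\ref{homo_tildeGn3}, so no compatibility with the $PB_n$-relations needs to be re-checked, and the only substantive observation is the symmetry of $a_{\{ijk\}}$ used to identify $pr(\bar{c_{j,i}})$ with $pr(\bar{c_{i,j}})$.
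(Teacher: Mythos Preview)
Your proposal is correct and follows exactly the route the paper intends: the paper's ``proof'' is the single sentence that the proposition follows from the definition of $pr$, and you have simply written out the generator-by-generator verification that this sentence leaves implicit. The only remark is that your emphasis on the symmetry of $a_{\{ijk\}}$ in identifying $pr(\bar{c_{j,i}})$ with $c_{i,j}$ is slightly overstated, since the paper's formula~(2.6) for $\bar{c_{j,i}}$ already records the $a$-factors as $a_{ijk}$ rather than $a_{jik}$; once $\sigma_{ji}$ is killed, the identification is literal.
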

Now we define homomorphism $i$ from $G_{n}^{3}$ to $\widetilde{G}_{n}^{3}$ by $i(a_{ijk}) = a_{ijk}$. Then $i \circ pr = Id_{G_{n}^{3}}$.

Besides, we define the homorphism $\pi$ from $\widetilde{G_{n}^{3}}$ to $G_{n+1}^{3}$ by $\pi(a_{ijk}) =a_{ijk}$ and $\pi(\sigma_{ij}) = a_{ij(n+1)}$. 
In~\cite{ManturovNikonov} V.O.Manturov and I.M.Nikonov studied homomorphisms from $G_{n}^{k}$ to the free product of copies of groups $\mathbb{Z}_{2}$. This, in turn, leads to the homomorphism $w_{ijk} : G_{n}^{3} \rightarrow F_{n}^{3}$ is constructed, where
$$F_{n}^{3} = \langle \{ \sigma ~|~ \sigma : \{1,2,\cdots n\} \backslash \{i,j,k\} \rightarrow \mathbb{Z}_{2} \times \mathbb{Z}_{2} \} ~|~ \{ \sigma^{2} = 1\} \rangle \cong \mathbb{Z}_{2}^{*2^{2(n-3)}}.$$
For each generator $a_{ijk}$ in $\beta = F a_{ijk} B \in G_{n}^{k}$, let us define the mapping $i_{a_{ijk}} : \bar{n} \backslash \{ i,j,k \} \rightarrow \mathbb{Z}_{2} \times \mathbb{Z}_{2}$ by
$$i_{c}(l) = (N_{jkl}+N_{ijl}, N_{ikl}+N_{ijl}) \in \mathbb{Z}_{2} \times \mathbb{Z}_{2},$$
for $l \in  \bar{n} \backslash \{ i,j,k \}$, where $N_{jkl}$ is the number of occurencies of $a_{ikl}$ in $F$. We call $i_{a_{ijk}}$ ``index'' of $a_{ijk}$ in $\beta$. Let $\{c_{1}, \cdots, c_{m}\}$ be the ordered set of all $a_{ijk}$ in $\beta$ such that if $\beta = T_{l} c_{l} B_{l}$, then $c_{s} \in T_{l}$ for $s<l$. Define $w_{ijk} : G_{n}^{3} \rightarrow F_{n}^{3}$ by
$$w_{(i,j,k)}(\beta) = i_{c_{1}}i_{c_{2}} \cdots i_{c_{m}}.$$

Note that if generators on the right hand side have indices, then we can define indices for generators on the left hand side. In other words, if we get a homomorphism $x : G \rightarrow H$ for generators of the group $G$ and $H$, and if indices $i$ for generators in the group $H$ are defined,
$$ \cdots a \cdots \rightarrow \cdots x(a) \cdots $$
then the indices $j$ for generators in $G$ can be defined by
$$ j(a) := i(x(a)).$$
That is, for a generator $b$ in $\widetilde{G_{n}^{3}}$, the index $j_{b}$ can be defined by means of $G_{n+1}^{3}$ as follow:
$$j_{b}(l) := i_{\pi(b)}(l),$$
for each $l \in  \overline{n+1} \backslash \{ i,j,k \}$ .
Analogously we define the homomorphism from $\widetilde{G_{n}^{3}}$ to the free product of copies of groups $\mathbb{Z}_{2}$ as follow:
$$\widetilde{w}_{ijk} := w_{(i,j,k)} \circ \pi : \widetilde{G_{n}^{3}} \rightarrow G_{n+1}^{3} \rightarrow {Z}_{2}^{*2^{2((n+1)-3)}}.$$

\begin{exa}\label{index-classical} For a braid $b_{24} \in PB_{6}$,\\
$\pi \circ \widetilde{\phi}(b_{24}) =\pi(a_{123}\sigma_{23}a_{236}a_{235}a_{124}\underline{\sigma_{24}}a_{246}a_{245}a_{234}a_{124}\underline{\sigma_{42}}a_{246}a_{245}a_{234}a_{235}a_{236}\sigma_{23}^{-1}a_{123})$\\
$=a_{123}a_{237}a_{236}a_{235}a_{124}\underline{a_{247}}a_{246}a_{245}a_{234}a_{124}\underline{a_{247}}a_{246}a_{245}a_{234}a_{235}a_{236}a_{237}a_{123}.$\\
Then 
\begin{center}
$ i_{a_{247}}= \left (
\begin{array}{cccc} 
   1 & 1  & 0  & 0\\
   1  & 0  & 0 & 0\\
  
   \end{array} \right )$
and $ i_{a_{247}}= \left (
\begin{array}{cccc} 
   0 & 0  & 1  & 1\\
   0  & 1  & 1 & 1\\
  
   \end{array} \right ), $ 
   
   \end{center}
where $i_{c} = (i_{c}(1),i_{c}(3),i_{c}(5),i_{c}(6)).$ We define indices $j$ for generators $\sigma_{24}$ and $\sigma_{42}$ with respect to $i$.

\begin{center}
$j_{\sigma_{24}} := i_{a_{247}}= \left (
\begin{array}{cccc} 
   1 & 1  & 0  & 0\\
   1  & 0  & 0 & 0\\
  
   \end{array} \right )$
and $j_{\sigma_{42}} := i_{a_{247}}= \left (
\begin{array}{cccc} 
   0 & 0  & 1  & 1\\
   0  & 1  & 1 & 1\\
  
   \end{array} \right ). $ 
   
   \end{center}
\end{exa}

\section{Non-cancellable generators $b_{ij}$ and $b_{ij}^{-1}$}

We will use the group $\widetilde{G}_{n}^{3}$ to know whether two classical crossing cannot be canceled or not.
\begin{exa}\label{newindex-brunnian}
Let $\beta = [b_{12},b_{13}]$ in $PB_{3}$. 
\begin{eqnarray*}
\pi \circ \widetilde{\phi}(\beta) &=&\pi(\sigma_{12}a_{123}\sigma_{12}a_{123}\sigma_{12}\sigma_{13}a_{123}\sigma_{13}a_{123}\sigma_{12}^{-1}a_{123}\sigma_{12}^{-1}\sigma_{13}^{-1}a_{123}\sigma_{13}^{-1}\sigma_{12}^{-1})\\
&=&a_{124}a_{123}a_{124}a_{123}a_{124}a_{134}a_{123}a_{134}a_{123}a_{124}a_{123}a_{124}a_{134}a_{123}a_{134}a_{124}
\end{eqnarray*}
in $G_{4}^{3}$. Then $$\widetilde{w}_{123}(\widetilde{\phi}(\beta)) = (1,0)(0,0)(1,1)(1,0)(0,0)(1,1)  \neq 1$$
in $F_{4}^{3}$, where $i_{a_{123}}(4) = (N_{124} + N_{234},N_{134} + N_{234})$ mod $2$, and $\beta$ is non trivial braid in $PB_{3}$. Note that the braid $\beta$ is Brunnian, and $w_{ijk} \circ \phi (\beta) =1$ for Brunnian braids $\beta$, that is, $w_{123}(\phi(\beta)) =1$ (Theorem 3.4 from \cite{KimManturov}).
\end{exa}
\begin{thm}\label{obstacletocancle}
For pure braids in the form of $\beta = Ab_{ij}Bb_{ij}^{-1}C \in PB_{n}$, where $A,B,C \in PB_{n}$, if the total number of $b_{ik}$ and $b_{jk}$ in $B$ is odd, and there are no $b_{ij}$ and $b_{ij}^{-1}$ in $B$, then $b_{ij}$ and $b_{ij}^{-1}$ cannot be canceled by relations of $PB_{n}$.
\end{thm}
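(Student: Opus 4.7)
The plan is to use the composite invariant $\widetilde{w}_{ij(n+1)} = w_{(i,j,n+1)} \circ \pi \circ \widetilde{\phi}$, which is a genuine group-valued invariant of pure braids by Theorem~\ref{homo_tildeGn3} and the construction of Section~3. The argument will proceed by contradiction: if the two distinguished letters $b_{ij}$ and $b_{ij}^{-1}$ were cancellable by relations of $PB_n$, then $\beta$ would be equivalent to a word $\beta' = A' B' C'$ in which this specific $b_{ij}^{\pm1}$-pair has been removed. Since $\widetilde{w}_{ij(n+1)}$ is a homomorphism, one would have $\widetilde{w}_{ij(n+1)}(\beta) = \widetilde{w}_{ij(n+1)}(\beta')$, and I will produce a coordinate $l \in \bar{n}\setminus\{i,j\}$ for which the four letters $a_{ij(n+1)}$ coming from the $\sigma_{ij}, \sigma_{ji}$ factors in $\pi\circ\widetilde{\phi}(b_{ij})$ and from the $\sigma_{ij}^{-1}, \sigma_{ji}^{-1}$ factors in $\pi\circ\widetilde{\phi}(b_{ij}^{-1})$ carry mismatched indices, and therefore contribute a non-trivial word in the free product that cannot be reproduced from $\beta'$.

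Concretely, for $c = a_{ij(n+1)}$ and any $l \in \bar{n}\setminus\{i,j\}$ the index reads
\[
i_{c}(l) = \bigl(N_{j(n+1)l} + N_{ijl},\ N_{i(n+1)l} + N_{ijl}\bigr) \pmod{2},
\]
where $N_{stu}$ counts the occurrences of $a_{stu}$ in the prefix preceding $c$. Letters of type $a_{i(n+1)l}$ in $\pi\circ\widetilde{\phi}(B)$ arise exclusively from $\sigma_{il}^{\pm1}$ or $\sigma_{li}^{\pm1}$, and so exclusively from images of $b_{il}^{\pm1}$; symmetrically for $a_{j(n+1)l}$ and $b_{jl}^{\pm1}$. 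A direct reading of the formula $\widetilde{\phi}(b_{pq}) = c_{p,p+1}^{-1}\cdots c_{p,q-1}^{-1}\bar{c_{p,q}}\,\bar{c_{q,p}}\,c_{p,q-1}\cdots c_{p,p+1}$ shows that the outer conjugating factors $c_{p,r}^{\pm1}$ contribute their $\sigma$-letters in pairs of opposite signs (hence an even number of $a_{\cdot\,(n+1)\,\cdot}$-letters after applying $\pi$), while the two central factors $\bar{c_{p,q}}$ and $\bar{c_{q,p}}$ each contribute exactly one $\sigma_{pq}$ and one $\sigma_{qp}$; the bulk $a_{stu}$-letters within each $c_{\cdot,\cdot}^{\pm1}$ also appear in pairs.

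Collecting these counts, the parity of $N_{i(n+1)l}$ accumulated along $\pi\circ\widetilde{\phi}(B)$ equals the parity of the number of $b_{il}^{\pm1}$ letters in $B$, and symmetrically for $N_{j(n+1)l}$; the contribution of $N_{ijl}$ is even. The hypothesis that the total count of $b_{ik}$ and $b_{jk}$ in $B$ is odd then guarantees the existence of an $l$ with odd count of $b_{il}^{\pm1}$ in $B$ or odd count of $b_{jl}^{\pm1}$ in $B$ (or both). For such an $l$, at least one coordinate of $i_{c}(l)$ flips parity between the $b_{ij}$-block and the $b_{ij}^{-1}$-block, so the four distinguished $a_{ij(n+1)}$-letters do not pairwise collapse in the free product and their joint contribution to $\widetilde{w}_{ij(n+1)}(\beta)$ is non-trivial. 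Since the companion word $\beta'$ carries two fewer letters at these positions and no corresponding obstruction in the free product, the equality $\widetilde{w}_{ij(n+1)}(\beta) = \widetilde{w}_{ij(n+1)}(\beta')$ fails, yielding the required contradiction.

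The main technical burden is the accounting lemma supporting the third paragraph, namely that for each generator $b_{pq}^{\pm1}$ in $B$ the contributions to $N_{i(n+1)l}$, $N_{j(n+1)l}$ and $N_{ijl}$ from $\widetilde{\phi}(b_{pq}^{\pm1})$ are all even except for the single odd jump produced when $\{p,q\} = \{i,l\}$ or $\{p,q\} = \{j,l\}$. This requires a short case analysis on the relative order of $i,j,l,p,q$ and on whether the $\sigma$-letter comes from a central $\bar{c_{\cdot,\cdot}}$ or a conjugating $c_{\cdot,\cdot}^{\pm1}$, but the mechanism is exactly the one already witnessed in Example~\ref{newindex-brunnian}, which is recovered as the smallest instance $n=3$, $i=1$, $j=2$, $A=C=1$, $B=b_{13}$.
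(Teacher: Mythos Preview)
Your approach differs from the paper's in the choice of which triple to feed into $w$: you use $\widetilde{w}_{ij(n+1)}$ and track the letters $a_{ij(n+1)}=\pi(\sigma_{ij}^{\pm1})$, whereas the paper uses $\widetilde{w}_{ijk}$ (with the $k$ from the hypothesis) and tracks $a_{ijk}$, reading the index at $l=n+1$. This is not a cosmetic difference: your invariant is too coarse and in fact vanishes on the very Example~\ref{newindex-brunnian} you cite.

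The gap is in the accounting lemma of your third paragraph. You assert that the parity of $N_{i(n+1)l}$ accumulated along $\pi\circ\widetilde{\phi}(B)$ equals the parity of the number of $b_{il}^{\pm1}$ in $B$. But inspect $\widetilde{\phi}(b_{il})$: the central block $\bar{c_{i,l}}\,\bar{c_{l,i}}$ contributes exactly \emph{two} letters $\sigma_{il},\sigma_{li}$, both mapping to $a_{il(n+1)}$ under $\pi$; every conjugating $c_{i,r}^{\pm1}$ also contributes its $\sigma$-letters in cancelling pairs. Hence every generator $b_{pq}^{\pm1}$ of $B$ adds an \emph{even} number to each of $N_{i(n+1)l}$, $N_{j(n+1)l}$, $N_{ijl}$, and the index $i_c(l)$ of the $a_{ij(n+1)}$-letters does not flip across $B$ at all. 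Concretely, for $\beta=[b_{12},b_{13}]\in PB_3$ your $w_{(1,2,4)}$ applied to $\pi\circ\widetilde{\phi}(\beta)$ reads off the six $a_{124}$-letters with indices $(0,0),(1,1),(0,0),(0,0),(1,1),(0,0)$ at $l=3$, which reduces to $1$ in the free product. The paper's $\widetilde{w}_{123}$ does not vanish precisely because between two consecutive $a_{ijk}$-letters inside $\pi\circ\widetilde{\phi}(b_{ij})$ there sits a \emph{single} $a_{ij(n+1)}$ (from $\sigma_{ij}$), giving the odd shift $(1,0)$ at $l=n+1$; this is what makes the images of $b_{ij},b_{ik},b_{jk}$ distinguishable and forces $\widetilde{w}_{ijk}\circ\widetilde{\phi}(f_{ijk}(B))\neq 1$ under the odd-count hypothesis.
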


\begin{proof}
Without loss of generality, assume that $i<j<k$. Note that $\widetilde{w}_{ijk} \circ \widetilde{\phi}(\beta) = \widetilde{w}_{ijk} \circ \widetilde{\phi}(f_{ijk}(\beta))$, where $f : PB_{n} \rightarrow PB_{n}$ is the endomorphism defined by
\begin{equation}
f_{ijk}(b_{st})  = \left\{
\begin{array}{cc} 
    1, & \text{if}~ |\{s,t\}  \cap \{i,j,k\} | \neq 2, \\
      b_{st}, & \text{if}~ |\{s,t\}  \cap \{i,j,k\}| = 2.
      
   \end{array}\right.
   \end{equation}
The homomorphism $\widetilde{w}_{ijk}$ is valued on the free product of copies of $\mathbb{Z}_{2}$ and it follows that $\widetilde{w}_{ijk} \circ \widetilde{\phi}(f_{ijk}(B)) =1$ for two generators $b_{ij}$ and $b_{ij}^{-1}$ to be cancelled. By the assumption we have $f_{ijk}(B) = c_{1}\cdots c_{k}$ for $k \equiv 1$ mod $2$, where $c_{i} \in \{b_{ik},b_{ik}^{-1},b_{jk}, b_{jk}^{-1}\}$.
Now we calculate $\widetilde{w}_{ijk} \circ \widetilde{\phi}(b_{ij}) = w_{ijk} \circ \pi \circ  \widetilde{\phi}(b_{ij})$, where $i_{a_{ijk}}(n+1) = (a_{ij(n+1)}+a_{jk(n+1)}, a_{ik(n+1)}+a_{jk(n+1)}) \in \mathbb{Z}_{2} \times \mathbb{Z}_{2}$. By the definition it follows that

\begin{eqnarray*}
\pi \circ \widetilde{\phi}(b_{ij}) &=& \pi(c_{i,i+1}^{-1} \cdots c_{i,j-1}^{-1} \bar{c_{i,j}}\bar{c_{j,i}}c_{i,j-1} \cdots c_{i,j-1}) \\
&=& c_{i,i+1}^{-1}\cdots c_{i,j-1}^{-1} c_{i,j}c_{i,j}c_{i,j-1}\cdots c_{i,j-1}.
\end{eqnarray*}

In the words $c_{i,j}$,$c_{i,k}$,$c_{j,k}$ there are generators $a_{ijk}$, $a_{ij(n+1)}$, $a_{ik(n+1)}$, $a_{jk(n+1)}$, but they are not in $c_{s,t}$ for $\{s,t\} \cap \{i,j,k\} < 2$. Therefore $c_{s,t}$ for $\{s,t\} \cap \{i,j,k\} < 2$ do not affect to $i_{a_{ijk}}(n+1)$, and we focus on $w_{ijk}(c_{i,j}c_{i,j})$.
Then
$$w_{ijk}(c_{i,j}c_{i,j}) = a_{ij(j+1)} \cdots a_{ijk} \cdots a_{ij(n+1)} a_{1ij} \cdots a_{ij(i-1)}a_{ij(j+1)} \cdots a_{ijk} \cdots a_{ij(n+1)} a_{1ij} \cdots a_{ij(i-1)},$$
and there are exactly two $a_{ijk}$, denote them by $c_{1}$ and $c_{2}$. From simple calculations, we obtain that $w_{ijk} \circ \pi \circ \widetilde{\phi}(b_{ij}) = i_{c_{1}}i_{c_{2}}$, where  
$$i_{c_{1}}(n+1) = (0,0), i_{c_{2}}(n+1) = (1,0).$$
 Analogously, we can get that $\widetilde{w}_{ijk} \circ \widetilde{\phi}(b_{ik}) = i_{c_{1}}i_{c_{2}}i_{c_{3}}i_{c_{4}}$, where $$i_{c_{1}}(n+1) = (1,0), i_{c_{2}}(n+1) = (1,1), i_{c_{3}}(n+1) = (1,0), i_{c_{4}}(n+1) = (1,0),$$
and $\widetilde{w}_{ijk} \circ \widetilde{\phi}(b_{jk}) = i_{c_{1}}i_{c_{2}}$, where 
  $$i_{c_{1}}(n+1) = (1,1), i_{c_{2}}(n+1) = (0,0).$$ 
  That is, the images of the homomorphism $w_{ijk} \circ \pi \circ \widetilde{\phi}$ of $\{b_{ij},b_{ik},b_{jk} \}$ have different values. It is easy to show that if $w_{ijk} \circ \pi \circ \widetilde{\phi}(P)=1$ for a product $P$ of $\{b_{ij},b_{ik},b_{jk} \}$, then $P$ is the product of words $(b_{ij}b_{ik}b_{jk})^{\pm 1}$ and $(b_{jk}b_{ik}b_{ij})^{\pm 1}$ up to relations in $PB_{n}$. 
 But $k \equiv 1$ mod $2$, and there are no $b_{ij}$ in $B$,
$$\widetilde{w}_{ijk} \circ \widetilde{\phi}(f_{ijk}(B)) =  \widetilde{w}_{ijk} \circ \widetilde{\phi}(c_{1}\cdots c_{k}) \neq 1,$$
and hence $b_{ij}$ and $b_{ij}^{-1}$ cannot be cancelled.
\end{proof}

\begin{rem}
Let $\beta$ be a braid on $n$ strands and let two strands $i$ and $j$ be fixed. Let $c_{1},c_{2}$ be classical crossings $\sigma_{ij},\sigma_{ij}^{-1}$ between $i$ and $j$ strands. 
Is it possible for $c_{1}$ and $c_{2}$ in $\beta$ to be cancelled? To answer the question, we need to use indices for crossings $c_{1},c_{2}$. 

As asserted in the previous section, the indices are defined by the homomorphism from $PB_{n}$ to $\widetilde {G}_{n}^{3}$. That is, to obtain the ``local'' information (indices for crossings) we use ``the global'' information (homomorphisms $\widetilde{w}_{ijk}$). As Theorem~\ref{obstacletocancle} indices (obstruction to reducing) work in groups $G_{n}^{k}$. 

In the simplest case, $G_{n}^{2}$, ``an algorithm of descending'' takes place, in other words, if a word $\beta$, which is obtained from $G_{n}^{2}$ and is not minimal, then there must be two ``cancellable'' crossings, i.e. $\beta= X a_{ij} Y a_{ij} Z$, and $\beta$ can be converted into $\beta' = X {\tilde Y} a_{ij} a_{ij} Z$ by relations of $G_{n}^{2}$, without change of the length of the word, and in the final word $\beta'$ there are two crossings $a_{ij}$, which are directly reduced. Step by step, we obtain a word of minimal length. Moreover, if two words $\beta'$ and $\beta''$ obtained from a word $\beta$ have the minimal length, then there is a sequence $S$ of relations between $\beta'$ and $\beta''$ such that every word, which is obtained by a subsequence of $S$ from $\beta'$, does not have reducing crossings. It means that Diamond lemma holds for the standard group presentation of $G_{n}^{2}$. In~\cite{Manturov2} this is proved by means of the Coxeter groups.

In the case of groups $G_{n}^{3}$ the Diamond lemma does not take place: there is an element, which has two different representatives of the minimal length in the group $G_{4}^{3}$. For example, let words
$$a_{123}a_{124}a_{134}a_{234}a_{234}a_{124}a_{134}a_{123} ~\text{and}~ a_{234}a_{134}a_{124}a_{123}a_{123}a_{134}a_{123}a_{234}$$ 
in the group $G_{4}^{3}$. They are equivalent by the following sequence of deformations
\begin{eqnarray*}
a_{123}a_{124}a_{134}a_{124}a_{134}a_{123}  &=& a_{123}a_{124}a_{134}a_{234}a_{234}a_{124}a_{134}a_{123} \\
&=& a_{234}a_{134}a_{124}a_{123}a_{123}a_{134}a_{123}a_{234} \\
&=& a_{234}a_{134}a_{124}a_{134}a_{123}a_{234}.
\end{eqnarray*}
To the words $a_{123}a_{124}a_{134}a_{124}a_{134}a_{123}$ and $a_{234}a_{134}a_{124}a_{134}a_{123}a_{234}$ any relations from Definition~\ref{def_Gn3} cannot be applied except for relations $a_{ijk}^{2}=1$, that is, they have minimal lengths. But, without relations $a_{ijk}^{2}=1$, the word $a_{123}a_{124}a_{134}a_{124}a_{134}a_{123}$, cannot be deformed to the word $a_{234}a_{134}a_{124}a_{134}a_{123}a_{234}$.

It is well-known that in Artin presentation of the classical braids group, the representative of the minimal length is not unique. For example, it is related to handle reductions, which play important role in the algorithm Dehornoy~\cite{Dehornoy}. 
      

It is interested to study connections between handle reductions for classical braids and the phenomenon in $G_{4}^{3}$ as the above. 
\end{rem}



\end{document}